\definecolor{chroma4blue}{RGB}{63,97,135}
\definecolor{chroma4green}{RGB}{65,143,126}
\definecolor{chroma4sand}{RGB}{171,166,125}
\definecolor{chroma4gray}{RGB}{199,199,199}
\active\gdef@{\mkern1mu}}
\DeclareMathOperator{\diag}{diag}
\newcommand{\Ep}{{\cal E}}
\newcommand{\R}{\mathbb{R}}
\newcommand{\Cm}{C}
\newcommand{\cm}{c}
\newcommand{\Pm}{P}
\newcommand{\GH}{Gomory-Hu triangle inequality\xspace}
\newcommand{\bo}{\mathds{1}}
\theoremstyle{definition}
\newtheorem{lemma}{Lemma}
\newtheorem*{example*}{Example}
\newtheorem{corollary}[lemma]{Corollary}
\newtheorem{theorem}[lemma]{Theorem}
\newtheorem{definition}[lemma]{Definition}
\numberwithin{lemma}{section}
\numberwithin{fact}{section}
\numberwithin{equation}{section}
\newcommand{\pushright}[1]{\ifmeasuring@#1\else\omit\hfill$\displaystyle#1$\fi\ignorespaces}
\newcommand{\pushleft}[1]{\ifmeasuring@#1\else\omit$\displaystyle#1$\hfill\fi\ignorespaces}
\newlength{\negph@wd}
\DeclareRobustCommand{\negphantom}[1]{%
  \ifmmode
    \mathpalette\negph@math{#1}%
  \else
    \negph@do{#1}%
  \fi
}
\newcommand{\negph@math}[2]{\negph@do{$\m@th#1#2$}}
\newcommand{\negph@do}[1]{%
  \settowidth{\negph@wd}{#1}%
  \hspace*{-\negph@wd}%
}
\title{\Large \textbf{Edge-connectivity matrices and their spectra}}
\author{\normalsize Tobias Hofmann, Uwe Schwerdtfeger}
\date{} 
\affil{\normalsize
\texttt{tobias.hofmann@math.tu-chemnitz.de}\\\normalsize \texttt{regeftdrewhcs.ewu@gmail.com}\\
}
\begin{document}
\maketitle

\begin{abstract}
\noindent
\textbf{Abstract.} The edge-connectivity matrix of a weighted graph is the matrix whose off-diagonal $v$-$w$ entry is the weight of a minimum edge cut separating vertices $v$ and $w$. Its computation is a classical topic of combinatorial optimization since at least the seminal work of \mbox{Gomory} and Hu. In this article, we investigate spectral properties of these matrices. In particular, we provide tight bounds on the smallest eigenvalue and the energy. Moreover, we study the eigenvector structure and show in which cases eigenvectors can be easily obtained from matrix entries. These results in turn rely on a new characterization of those nonnegative matrices that can actually occur as edge-connectivity matrices.\\

\noindent
\textbf{Keywords.} graph spectra, graph energy, minimum cuts, edge-disjoint paths, path matrices\\

\noindent
\textbf{MSC Subject classification.} 15B99, 05C50, 05C40, 05C12, 05C21

\end{abstract}

\section{Introduction}

For an undirected graph~$G=(V,E)$ with nonnegative edge weights its \emph{edge-connectivity matrix} is the $V\times V$ matrix $\Cm(G)$ whose off-diagonal $v$-$w$ entry denotes the minimum weight of an edge set whose removal disconnects the vertices $v$~and~$w$. The diagonal entries are defined as zero. In the seminal article~\cite{gomoryhu1961multi}, Gomory and Hu show that there exists a weighted tree~$T=(V,F)$ on the same vertex set as~$G$, but not necessarily with $F\subseteq E$, such that $\Cm(T)=\Cm(G)$. Even stronger, for each pair of vertices $v$~and~$w$ the two sides of a minimum cut that separates $v$~and~$w$ in~$T$ also induce a minimum cut that separates $v$~and~$w$ in~$G$. An auxiliary result of Gomory and Hu, which is particularly important for our investigation, is a characterization of those matrices that can occur as edge-connectivity matrices of weighted graphs. This description is in terms of a special triangle inequality, which is stated in Theorem~\ref{thm:triangle}. In this article, we provide another characterization and demonstrate how it can be utilized to gain further insights about the spectrum.
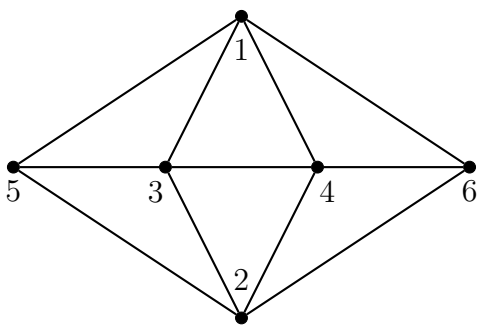
\begin{figure}
\centering
\begin{tikzpicture}[scale=2]
    \node[draw=black, fill=black, inner sep=0.55mm, circle, label={[label distance=-0.25ex]270:$5$}] (5) at (0,0) { };
    \node[draw=black, fill=black, inner sep=0.55mm, circle] (3) at (1,0) { };
    \node[label={270:$3$}] (3') at (0.935,0.25ex) { };
    \node[draw=black, fill=black, inner sep=0.55mm, circle] (4) at (2,0) { };
    \node[label={270:$4$}] (4') at (2.065,0.25ex) { };
    \node[draw=black, fill=black, inner sep=0.55mm, circle, label={[label distance=-0.25ex]270:$6$}] (6) at (3,0) { };
    \node[draw=black, fill=black, inner sep=0.55mm, circle, label={[label distance=0.5ex]270:$1$}] (1) at (1.5,1) { };
    \node[draw=black, fill=black, inner sep=0.55mm, circle, label={[label distance=0.75ex]90:$2$}] (2) at (1.5,-1) { };
    \draw[black, thick] (1) to (3);
    \draw[black, thick] (1) to (4);
    \draw[black, thick] (1) to (5);
    \draw[black, thick] (1) to (6);
    \draw[black, thick] (2) to (3);
    \draw[black, thick] (2) to (4);
    \draw[black, thick] (2) to (5);
    \draw[black, thick] (2) to (6);
    \draw[black, thick] (5) to (3);
    \draw[black, thick] (3) to (4);
    \draw[black, thick] (4) to (6);
\end{tikzpicture}
\caption{A graph $G$ for which the matrix $P(G)+D(G)$ is not positive semidefinite}
\label{fig:CounterExample}
\end{figure}%

Our considerations about the spectrum of the edge-connectivity matrix are inspired by recent articles that investigate the spectrum of the \emph{vertex-connectivity matrix} or \emph{path matrix}~$\Pm(G)$ of an unweighted graph $G=(V,E)$. This is the $V\times V$~matrix whose off-diagonal \mbox{$v$-$w$}~entry is the maximum number of independent $v$-$w$~paths with only zeros on the diagonal. Shikare and coauthors~\cite{shikare2018path} raised the conjecture that the \mbox{\emph{energy}} of~$\Pm(G)$, that is the sum of the absolute values of the eigenvalues, is at most~$2(n-1)^2$. Ilić and Bašić~\cite{ilic2019path} claim to prove this bound by employing the approach of Koolen and Moulton~\cite{koolen2001maximal}. However, following their arguments carefully, they only obtain an upper bound of $(n-1)^{5/2}+(n-1)^{3/2}$, which is strictly larger than~$(n-1)^2$ for all~$n\in\mathbb{N}$. Nevertheless, our numerical investigations have not yet revealed any counterexamples to the stated bound. An analogous result for the edge-connectivity matrix is presented in Item~\ref{thm:VDPassertions:1} of Theorem~\ref{thm:VDPassertions}. In another article~\cite{patekar2020signless}, Patekar and Shikare claim that $\Pm(G)+D(G)$ is positive semidefinite, where $D(G)$ denotes the diagonal matrix of vertex degrees. This would immediately imply the conjecture about the energy. However, the claim that $\Pm(G)+D(G)$ is positive semidefinite is indeed false. A counter example is given by the graph $G$ in Figure~\ref{fig:CounterExample}
for which
\begin{equation*}
    \Pm(G)+D(G) = \begin{bmatrix}[first-row, first-col]
          & \textcolor{chroma4gray!80!black}{1} & \textcolor{chroma4gray!80!black}{2} & \textcolor{chroma4gray!80!black}{3} & \textcolor{chroma4gray!80!black}{4} & \textcolor{chroma4gray!80!black}{5} & \textcolor{chroma4gray!80!black}{6} \\
        \textcolor{chroma4gray!80!black}{1} & 4 & 4 & 4 & 4 & 3 & 3 \\
        \textcolor{chroma4gray!80!black}{2} & 4 & 4 & 4 & 4 & 3 & 3 \\
        \textcolor{chroma4gray!80!black}{3} & 4 & 4 & 4 & 3 & 3 & 3 \\
        \textcolor{chroma4gray!80!black}{4} & 4 & 4 & 3 & 4 & 3 & 3 \\
        \textcolor{chroma4gray!80!black}{5} & 3 & 3 & 3 & 3 & 3 & 3 \\
        \textcolor{chroma4gray!80!black}{6} & 3 & 3 & 3 & 3 & 3 & 3 
    \end{bmatrix}.
\end{equation*}
This matrix is not positive semidefinite as it has a negative principal minor 
\begin{equation*}
    \det
    \begin{bmatrix}[r]
        4 & \hphantom{\!\!-}4 & \hphantom{\!\!-}4 \\
        4 & 4 & 3 \\
        4 & 3 & 4
    \end{bmatrix} = \det
    \begin{bmatrix}[r]
        4 & 0 & 0 \\
        4 & 0 & \!\!-1 \\
        4 & \!\!-1 & 0
    \end{bmatrix} = 4 \det
    \begin{bmatrix}[r]
        0 & \!\!-1 \\
        -1 & 0
    \end{bmatrix} = -4.
\end{equation*}
Another attempt to prove the energy bound was to show the weaker assumption that the smallest eigenvalue of $\Pm(G)$ is at least $-(n-1)$, but computer search revealed counterexamples, even for a lower bound $-(n+1)$.

It appears rather natural to consider also the related problem where the number of independent paths is replaced by the number of edge-disjoint paths or, what is the same, edge cuts. The preceding equivalence is the edge version of Menger's theorem~\cite{menger1927allgemeinen}, which, for instance, is presented by Diestel~\cite[Section 3]{diestel2017graph}. It turns out that the aforementioned assertions are both true for the edge-connectivity matrix and even stronger statements hold.
\begin{theorem}
For a weighted graph $G$ on vertex set $V$ let $\Cm=(c_{vw})$ be its edge-connectivity matrix. Denote for $v\in V$ by $m(v)\coloneqq\max\{\cm_{vw} : w\in V\setminus\{v\} \}$ the maximum off-diagonal entry in row $v$ and by $M\coloneqq\max\{\cm_{vw} : v,w\in V\}$ the maximum entry of~$\Cm$. Then the following statements hold.
\begin{enumerate}\label{thm:VDPassertions}
    \item The matrix $\Cm+\diag(m(v) : v\in V)$ is positive semidefinite.\label{thm:VDPassertions:1}
    \item The smallest eigenvalue of $\Cm$ is $-M$.\label{thm:VDPassertions:2}
    \item The energy of $\Cm$ is at most $2(n-1)M$ with equality if and only if $G$ is uniformly $M$-edge-connected.\label{thm:VDPassertions:3}
    \end{enumerate}
\end{theorem}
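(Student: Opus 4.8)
The plan is to base all three items on one structural decomposition of $\Cm$ that comes directly from the \GH (Theorem~\ref{thm:triangle}). The relevant consequence of that inequality is the ultrametric relation $\cm_{vw}\ge\min\{\cm_{vu},\cm_{uw}\}$ for every triple $u,v,w$, which makes the relation ``$\cm_{vw}\ge t$'' transitive: if $\cm_{vu}\ge t$ and $\cm_{uw}\ge t$ then $\cm_{vw}\ge t$. Hence for every threshold $t$ it is an equivalence relation and partitions $V$ into blocks. I would exploit this by a layer-cake decomposition and then reduce each assertion to elementary spectral bookkeeping.

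For Item~\ref{thm:VDPassertions:1}, let $0=t_0<t_1<\dots<t_k=M$ be the distinct values occurring among the off-diagonal entries together with $0$, and for each $j$ let $N_j$ be the symmetric $0/1$ matrix with $(N_j)_{vw}=1$ exactly when $v$ and $w$ lie in a common block of size at least two of the partition induced by ``$\cm\ge t_j$'' (so $(N_j)_{vv}=1$ iff $v$ is non-isolated at level $t_j$). Each $N_j$ is a direct sum of all-ones blocks $\mathds{1}_S\mathds{1}_S^{\top}$, hence positive semidefinite. A telescoping computation then shows that $\sum_{j}(t_j-t_{j-1})N_j$ has off-diagonal entry $\sum_j (t_j-t_{j-1})\mathds{1}[\cm_{vw}\ge t_j]=\cm_{vw}$ and diagonal entry $\sum_j (t_j-t_{j-1})\mathds{1}[m(v)\ge t_j]=m(v)$. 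Thus $\Cm+\diag(m(v):v\in V)=\sum_j (t_j-t_{j-1})N_j$ is a nonnegative combination of positive semidefinite matrices, which proves the claim.

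Item~\ref{thm:VDPassertions:2} is then almost immediate. From Item~\ref{thm:VDPassertions:1} we get $x^\top \Cm x\ge-\sum_v m(v)x_v^2\ge-M\,\tn{x}^2$ for all $x$, so $\lambda_{\min}(\Cm)\ge-M$. For the reverse inequality I would pick a pair $v,w$ with $\cm_{vw}=M$; the corresponding $2\times2$ principal submatrix is $\left[\begin{smallmatrix}0&M\\M&0\end{smallmatrix}\right]$, whose smallest eigenvalue is $-M$, and Cauchy interlacing forces $\lambda_{\min}(\Cm)\le-M$, giving equality. For the energy bound in Item~\ref{thm:VDPassertions:3}, note that $\Cm$ has zero diagonal, so its trace and hence the sum of its eigenvalues is $0$; therefore the sum of the positive eigenvalues equals the sum of absolute values of the negative ones, and the energy is twice either. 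By Item~\ref{thm:VDPassertions:2} every negative eigenvalue lies in $[-M,0)$, so the negative part is at most $qM$, where $q$ is the number of negative eigenvalues; since the vanishing trace forces at least one positive eigenvalue (we may assume $M>0$, the case $M=0$ being trivial), we have $q\le n-1$, whence the energy is at most $2(n-1)M$.

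The remaining, and most substantive, part is the equality case. Equality in the chain above forces $q=n-1$ negative eigenvalues all equal to $-M$ together with a single positive eigenvalue, which the vanishing trace pins down as $(n-1)M$. Then $\Cm+MI$ is a rank-one positive semidefinite matrix of the form $nM\,uu^\top$ with constant diagonal $M$, forcing $u_v=\pm1/\sqrt n$; because the off-diagonal entries of $\Cm$ are nonnegative, all signs must agree, so $\cm_{vw}=M$ for all $v\ne w$, that is, $G$ is uniformly $M$-edge-connected. The converse is the direct computation that $\Cm=M(\mathds{1}\mathds{1}^\top-I)$ has spectrum $(n-1)M$ and $-M$ with multiplicity $n-1$, and energy exactly $2(n-1)M$. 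I expect the two genuine steps to be the verification in Item~\ref{thm:VDPassertions:1} that the threshold relations really are equivalence relations, which is precisely where the \GH enters, and this spectral reconstruction in the equality case; the inequalities themselves are routine once these are in place.
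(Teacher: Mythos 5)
Your proof is correct, and its overall architecture coincides with the paper's: Item~\ref{thm:VDPassertions:1} via a layer-cake decomposition of $\Cm+\diag(m(v):v\in V)$ into nonnegative multiples of all-ones blocks, Item~\ref{thm:VDPassertions:2} via positive semidefiniteness of $\Cm+MI$ plus an explicit witness for $-M$, and Item~\ref{thm:VDPassertions:3} via trace-zero bookkeeping. Your matrices $N_j$ are precisely the ``terraced'' structure of Definition~\ref{def:terraced}, which the paper establishes in Theorems~\ref{thm:characteriztion} and~\ref{thm:psd}. Three sub-arguments differ genuinely. (1) You obtain the disjoint-block structure at each threshold in one line from the transitivity of the relation ``$\cm_{vw}\ge t$'' under the \GH; the paper instead proves the equivalence of the \GH with the terraced property by an induction on $|V|$ that peels off the minimum-entry level, which is longer but also delivers the converse implication it uses elsewhere. (2) For the eigenvalue $-M$ you use Cauchy interlacing on the $2\times 2$ principal submatrix indexed by a pair attaining $M$; the paper's Theorem~\ref{thm:mineig} exhibits $e_x-e_y$ as an exact eigenvector, which is slightly more work but powers the finer eigenvector and energy results of Theorem~\ref{thm:energyRefined}. (3) For the equality case of the energy bound you reconstruct $\Cm$ spectrally: forcing $n-1$ eigenvalues equal to $-M$ makes $\Cm+MI$ a rank-one positive semidefinite matrix with constant diagonal $M$, and nonnegativity of the off-diagonal entries then pins down $\Cm=M(J-I)$; the paper instead appeals to Perron--Frobenius theory (a nonnegative matrix with entries at most $M$ has spectral radius below $(n-1)M$ unless all row sums equal $(n-1)M$) and is rather terse about the intermediate step that equality forces the largest eigenvalue to be exactly $(n-1)M$. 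Your rank-one argument is self-contained and, if anything, more complete than the paper's at that point; the only care needed is the one you already took, namely separating the case $M=0$ before running the sign argument.
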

Note that Item~\ref{thm:VDPassertions:1} comprises the result that $\Cm(G)+D(G)$ is positive semidefinite for any graph $G$ because all $v,w\in V(G)$ satisfy $\cm_{vw}\le\min\{\deg(v),\deg(w)\}$ and thus also $m(v)\leq\deg(v)$ is true for all $v\in V(G)$. Proofs for the items of Theorem~\ref{thm:VDPassertions} are given in the subsequent sections in a more general setting.

Basic properties of the edge-connectivity matrix are also investigated by Akbari and coauthors~\cite{akbari2020edge}. They determine the spectrum of bicyclic graphs and consider the structure of \emph{uniformly $k$-edge-connected} graphs. These are graphs in which~\mbox{$k\in \mathbb{N}$} is the maximum number of edge-disjoint paths between any two vertices. Analogously, a graph is called \emph{uniformly $k$-connected} if \mbox{$k\in\mathbb{N}$} is the maximum number of independent paths between any two vertices. More about the structure of these graph classes can be found in Göring, Hofmann, and Streicher~\cite{uniformly2021}. Also note that there are results on the largest eigenvalue $\varrho$ of vertex-connectivity matrices. Shikare and coauthors~\cite{shikare2018path} show that $(n-1)\leq\varrho\leq(n-1)^2$ for a graph on $n$ vertices by using Perron-Frobenius arguments, which are presented comprehensively by Horn and Johnson~\cite[Chapter 8]{horn1990matrix}. Both bounds are tight. They are attained for trees or complete graphs, respectively. Moreover, the same arguments and obtained bounds hold analogously for edge-connectivity matrices.

{\bf Outline.} In Section \ref{sec:characterization}, we present an alternative characterization of edge-connec\-tivity matrices. We use these results in Section~\ref{sec:spectral} to prove spectral bounds and eigenvector properties. Finally, Section~\ref{sec:ultrametric} provides an alternative interpretation of our results for distance matrices whose entries satisfy an ultrametric.

We conclude this introduction with further notation that is particularly important for our investigation. All vectors and matrices in this article are indexed by some finite set~$V$ or~$V\times V$, respectively, and we write $n=|V|$ for short. For a subset of $X\subseteq V$ we write $\bo_X$ for the vector $x$ with~$x_v = 1$ if $v\in X$ and $x_v=0$ otherwise. By $J_X$ we denote the $V\times V$ matrix for which $(J_X)_{vw}=1$ if $(v,w)\in X\times X$ and $0$ otherwise. In other words, $J_X=\bo_{X}\bo_{X}^\top$. By $\bo$ we mean the all ones vector, for $J_V$ we occasionally write $J$ for short and~$I$ is the identity matrix. Furthermore, we denote $V\times V$ diagonal matrices whose diagonal entries are given by a sequence $(x_v)_{v\in V}$ by $\diag(x_v : v\in V)$. The vectors of the standard basis of $\R^V$ are denoted by $e_v$ for $v\in V$. For a symmetric matrix~$\Cm$ with eigenvalues $\lambda_1,\ldots,\lambda_n$ its \emph{energy}~$\Ep(\Cm)$ is defined as $\Ep(\Cm)=\sum_{i=1}^n |\lambda_i|$. An overview about methods and different variants of the energy concept is given by Li, Shi, and Gutman in the monograph~\cite{gutman2012energy}. By an \emph{equitable partition} of a matrix~$\Cm$ we mean a partition of~$\Cm$ into submatrices
\begin{equation*}
    \Cm= \begin{bmatrix}
        \Cm^{11} & \cdots & \Cm^{1k} \\
        \vdots   & \ddots & \vdots   \\
        \Cm^{k1} & \cdots & \Cm^{kk}
\end{bmatrix},
\end{equation*}
where each block $\Cm^{ij}$ has constant row sums equal to $q_{ij}$. The matrix $Q=(q_{ij})$ is called an \emph{equitable quotient matrix} of $\Cm$ and all eigenvalues of $Q$ are also eigenvalues of $\Cm$. You and coauthors~\cite{you2019spectrum} provide details about the structure of such matrices. Lastly, for graph theoretical terminology we refer to Diestel~\cite{diestel2017graph}. 

\section{Characterizing edge-connectivity matrices}\label{sec:characterization}
To begin, we recall the following characterization that is due to Gomory and Hu~\cite{gomoryhu1961multi}.
\begin{theorem}\label{thm:triangle}
A nonnegative real symmetric $V\times V$ matrix $\Cm$ with only zeros on its diagonal occurs as an edge-connectivity matrix of a weighted graph if and only if it satisfies the \emph{\GH}
\begin{equation*}\label{eq:property1}
    \cm_{xz} \ge \min\{\cm_{xy}, \cm_{yz}\}
\end{equation*}
for all $x,y,z \in V$ with $x\neq y\neq z\neq x$.
\end{theorem}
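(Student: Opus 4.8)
The statement is an equivalence, and the two directions call for rather different arguments. For the necessity direction, I would assume $\Cm=\Cm(G)$ for some weighted graph $G$ and fix three distinct vertices $x,y,z$. Choosing a minimum-weight $x$-$z$ cut, described by a vertex set $S$ with $x\in S$, $z\in V\setminus S$ and total cut weight $\cm_{xz}$, the vertex $y$ lies on exactly one side. If $y\in S$, the same cut separates $y$ from $z$, so $\cm_{yz}\le\cm_{xz}$; if $y\in V\setminus S$, it separates $x$ from $y$, so $\cm_{xy}\le\cm_{xz}$. Either way $\min\{\cm_{xy},\cm_{yz}\}\le\cm_{xz}$, which is the \GH. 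This half needs nothing beyond the definition of a minimum cut.

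For sufficiency, the plan is to reinterpret the triangle inequality as a bottleneck (widest-path) statement and then realize the matrix by a tree. Given a nonnegative symmetric $\Cm$ with zero diagonal satisfying the inequality, form the complete graph on $V$ in which the pair $\{v,w\}$ carries weight $\cm_{vw}$. For vertices $x,z$, define the widest-path value as the maximum over all $x$-$z$ paths of the smallest weight along the path. I would first show that this value equals $\cm_{xz}$: taking the single edge $\{x,z\}$ shows it is at least $\cm_{xz}$, while an induction on the path length, applying the \GH at each step, yields $\cm_{xz}\ge\min_i\cm_{v_iv_{i+1}}$ for every path $x=v_0,\ldots,v_k=z$, so no path beats the direct edge.

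It remains to realize these bottleneck values by an actual weighted graph, and here a maximum-weight spanning tree $T$ of the weighted complete graph is the natural candidate. In a tree the minimum $x$-$z$ cut is simply the lightest edge on the unique $x$-$z$ path, so the corresponding entry of $\Cm(T)$ equals that bottleneck along the tree path. The key lemma---and the main technical obstacle---is the exchange argument showing that in a maximum spanning tree this tree-path bottleneck coincides with the global widest-path value: if some other $x$-$z$ path avoided the lightest tree-path edge $f$ while using only heavier edges, then deleting $f$ and inserting the crossing edge of that path would produce a heavier spanning tree, contradicting maximality. Combining the three facts gives $\Cm(T)_{xz}=\cm_{xz}$ for all pairs, and since both matrices vanish on the diagonal I conclude $\Cm(T)=\Cm$, exhibiting $\Cm$ as an edge-connectivity matrix. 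I expect the bottleneck exchange argument, together with care about ties and possible zero-weight edges (which correspond to disconnected pairs), to be the part requiring the most attention.
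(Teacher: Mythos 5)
Your argument is correct, but note that the paper does not prove this statement at all: Theorem~\ref{thm:triangle} is simply recalled from Gomory and Hu~\cite{gomoryhu1961multi}, so there is no in-paper proof to compare against. Your two directions are the standard ones for this realization theorem. Necessity via a fixed minimum $x$-$z$ cut and a case split on which side $y$ lies is exactly right. For sufficiency, your chain of three facts is sound: the \GH gives, by induction along a path, $\cm_{xz}\ge\min_i \cm_{v_iv_{i+1}}$, so the bottleneck value in the weighted complete graph is $\cm_{xz}$; the cut-exchange property of a maximum spanning tree shows the tree path attains this bottleneck; and in a tree with nonnegative weights the minimum $x$-$z$ cut weight equals the lightest edge on the unique $x$-$z$ path (any cut must contain a path edge, and one such edge suffices). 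Ties cause no trouble since the exchange argument only needs the strict inequality you would derive from a supposedly better path, and zero entries are harmless since zero-weight tree edges correctly encode pairs with $\cm_{xz}=0$. The construction also yields the Gomory--Hu tree representation mentioned in the introduction, which is a small bonus of this route over a direct graph construction.
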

Our characterization is in terms of the following property.
\begin{definition}\label{def:terraced}
For a real symmetric $V\times V$ matrix~$\Cm=(\cm_{vw})$ and a real number~$\ell$ we denote by
\begin{equation*}\label{eq:def:superlevelsets}
    S_\ell(\Cm)=\{(v,w)\in V\times V:\cm_{vw}\ge \ell \}
\end{equation*}
the \emph{superlevel set} of $\Cm$. We call $\Cm$ \emph{terraced} if for each $\ell\in\R$ there is a set $\mathcal{T}(\ell)$ of pairwise disjoint subsets of $V$ such that
\begin{equation*}
    S_\ell(\Cm) = \,\bigcup_{\mathclap{X \in \mathcal{T}(\ell)}}\, X\times X 
\end{equation*}
\end{definition}
Both the \GH as well as the terraced structure make perfect sense without any assumptions on nonnegativity. So we formulate the next theorem for arbitrary symmetric matrices. 
\begin{theorem}\label{thm:characteriztion}
For a real symmetric $V\times V$ matrix $\Cm$ the following statements are equivalent.
\begin{enumerate}
    \item The matrix $\Cm$ is terraced.\label{thm:characteriztion:1}
    \item The matrix $\Cm$ satisfies the Gomory-Hu triangle inequality for all $x,y,z \in V$. In particular, $\cm_{vv}\ge \cm_{vw}$ for all $v, w\in V$.\label{thm:characteriztion:2}
    \end{enumerate}
\end{theorem}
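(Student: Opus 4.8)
The plan is to treat the two implications separately, built on the observation that the \GH is nothing other than a transitivity condition for the family of relations ``$\cm_{vw}\ge\ell$'' indexed by the threshold $\ell$, while the terraced property asserts that each such relation is, on the relevant vertex set, an equivalence relation whose classes are exactly the members of $\mathcal{T}(\ell)$.

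For the direction \ref{thm:characteriztion:2}$\Rightarrow$\ref{thm:characteriztion:1}, I would fix $\ell\in\R$, set $V_\ell=\{v\in V:\cm_{vv}\ge\ell\}$, and define $v\sim_\ell w$ if and only if $\cm_{vw}\ge\ell$. On $V_\ell$ this relation is reflexive by the definition of $V_\ell$, symmetric because $\Cm$ is symmetric, and transitive because applying the \GH to the triple $(v,w,u)$ yields $\cm_{vu}\ge\min\{\cm_{vw},\cm_{wu}\}$. Taking $\mathcal{T}(\ell)$ to be the set of equivalence classes produces pairwise disjoint subsets of $V$, and it remains to verify $S_\ell(\Cm)=\bigcup_{X\in\mathcal{T}(\ell)}X\times X$. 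The inclusion $\supseteq$ is immediate from the definition of $\sim_\ell$. For $\subseteq$, if $\cm_{vw}\ge\ell$ then the special case $x=z=v$, $y=w$ of the \GH gives $\cm_{vv}\ge\cm_{vw}\ge\ell$, and symmetrically $\cm_{ww}\ge\ell$, so $v,w$ both lie in $V_\ell$ and hence in a common class.

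For \ref{thm:characteriztion:1}$\Rightarrow$\ref{thm:characteriztion:2}, given arbitrary $x,y,z\in V$ I would set $\ell=\min\{\cm_{xy},\cm_{yz}\}$, so that both $(x,y)$ and $(y,z)$ belong to $S_\ell(\Cm)$. The terraced decomposition then places $(x,y)\in X_1\times X_1$ and $(y,z)\in X_2\times X_2$ for some $X_1,X_2\in\mathcal{T}(\ell)$; since these sets are pairwise disjoint and $y$ lies in both, $X_1=X_2$, whence $(x,z)\in X_1\times X_1\subseteq S_\ell(\Cm)$ and therefore $\cm_{xz}\ge\ell$. The ``in particular'' clause $\cm_{vv}\ge\cm_{vw}$ is then just the instance $x=z=v$, $y=w$.

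The only delicate point, which I expect to be the main obstacle, is the careful handling of the diagonal. The terraced definition includes the diagonal pairs $(v,v)$ in $S_\ell(\Cm)$, and it is precisely the diagonal-dominance special case of the \GH that both supplies reflexivity and guarantees that any vertex occurring in an off-diagonal pair of $S_\ell(\Cm)$ already lies in $V_\ell$. Getting this bookkeeping exactly right, so that the two decompositions match and no vertex is stranded outside its intended class, is the crux; the remaining verifications are routine.
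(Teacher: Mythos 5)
Your proof is correct, and one half of it departs from the paper in an interesting way. The direction \ref{thm:characteriztion:1}$\Rightarrow$\ref{thm:characteriztion:2} is essentially the paper's argument verbatim: both $(x,y)$ and $(y,z)$ land in blocks of $\mathcal{T}(\ell)$ sharing the vertex $y$, disjointness forces the blocks to coincide, and the diagonal-dominance clause is the instance $x=z$. For \ref{thm:characteriztion:2}$\Rightarrow$\ref{thm:characteriztion:1}, however, the paper proceeds by induction on $|V|$: it takes the global minimum entry $L$, splits $V$ into $X$ and $Y$ via a column realizing $L$, shows all of $\Cm\rvert_{X\times Y}$ equals $L$ by applying the \GH twice, and recurses on the two blocks. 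Your argument instead fixes a single threshold $\ell$, restricts attention to $V_\ell=\{v:\cm_{vv}\ge\ell\}$, and observes that ``$\cm_{vw}\ge\ell$'' is an equivalence relation there (reflexivity from the definition of $V_\ell$, symmetry from that of $\Cm$, transitivity from the \GH), with the diagonal-dominance consequence of the \GH guaranteeing that every vertex appearing in an off-diagonal pair of $S_\ell(\Cm)$ indeed lies in $V_\ell$; taking $\mathcal{T}(\ell)$ to be the set of equivalence classes finishes the proof. This is a direct, non-inductive construction that exhibits each $\mathcal{T}(\ell)$ explicitly and is arguably cleaner; what the paper's recursive splitting buys in exchange is the laminar (nested) structure of the partitions across levels, which mirrors the Gomory--Hu tree picture, though that extra structure is not needed for the equivalence itself. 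Your closing remark correctly identifies the handling of the diagonal as the only delicate point, and your treatment of it is sound.
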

\begin{proof}
We show first that \ref{thm:characteriztion:1} implies \ref{thm:characteriztion:2}. So let $\Cm$ be terraced and define $\mathcal{T}(\ell)$ as in Definition~\ref{def:terraced}. We choose $x,y,z\in V$ without loss of generality such that $\min\{\cm_{xy}, \cm_{yz}\}=\cm_{yz}\eqqcolon \ell$, in other words $\cm_{xy}\ge \cm_{yz}\ge \ell$. Then there is a set $X$ in~$\mathcal{T}(\ell)$ with $(x,y)\in X\times X$, which means that $x,y\in X$. Likewise, there is a set $X^\prime$ with $y,z\in X^\prime$. Because $z\in X\cap X^\prime$, we obtain that $X=X^\prime$, as the sets in $\mathcal{T}(\ell)$ are defined to be pairwise disjoint. We observe that $x,y,z\in X$ and in particular $(x,z)\in X\times X$ which implies $\cm_{xz}\ge \ell$, as desired.

We show that \ref{thm:characteriztion:2} implies \ref{thm:characteriztion:1} by induction on $n\coloneqq|V|$. For $n=1$ there is nothing to show. So let $n>1$ and define $L\coloneqq\min\{\cm_{vw} : v,w \in V\}$. Suppose that the column with index $z$ contains an entry equal to $L$ and define two sets $X$ and $Y$ by
\begin{equation*}
    X=\{x\in V: x\neq z \text{ and }\cm_{xz} = L\} \text{ and } Y=V\setminus X.
\end{equation*}
Observe that $X\neq\emptyset$ because $\cm_{zz}\geq\cm_{vz}$ for $v\in V$. Furthermore, we have $Y\neq\emptyset$ because $z\in Y$. For $x\in X$ and $y\in Y$ it follows that $\cm_{xz}\leq \cm_{yz}=\cm_{zy}$ and equality can only hold if $Y=\{z\}$ and $\cm_{zz}=L$. Applying the \GH twice, we obtain that
\[
\cm_{xy} \geq \min\{\cm_{xz}, \cm_{zy}\} = \cm_{xz}\geq \min\{\cm_{xy}, \cm_{yz}\} = \cm_{xy}.
\]
The last equality deserves a word of explanation. If $y\neq z$, then $\cm_{xz}<\cm_{yz}$, as otherwise $x\notin X$. Thus for the preceding inequality to hold, the last minimum must be equal to $\cm_{xy}$. If $y=z$, the last equality follows from $\cm_{zz}\ge \cm_{xz}$, which is true by the assumption in~\ref{thm:characteriztion:2}. In both cases, we obtain that $\cm_{xy}=\cm_{xz}=L$ for an arbitrary pair $(x,y)\in X\times Y$.

So we observe that all entries of $\Cm$ restricted to $X\times Y$ and $Y\times X$ are equal to $L$. Consequently, for $\ell>L$ the superlevel set $S_\ell(\Cm)$ is contained in $(X\times X) \cup (Y\times Y)$. The restrictions $\Cm\rvert_{X\times X}$ and $\Cm\rvert_{Y\times Y}$ of $\Cm$ to $X\times X$ and $Y\times Y$, respectively, satisfy the triangle inequality and, by induction, are terraced. Because $S_L(\Cm) = V$ and
\begin{equation*}
    S_\ell(\Cm) = S_\ell\big(\Cm\rvert_{X\times X}\big)\cup S_\ell\big(\Cm\rvert_{Y\times Y}\big)\quad\text{for }\ell>L,
\end{equation*}
we conclude that $\Cm$ is of the desired form.
\end{proof}
In the specific case of an edge-connectivity matrix $\Cm$ of a graph~$G$ on $n$ vertices, we know by the results of Gomory and Hu~\cite{gomoryhu1961multi} that such a matrix can have at most~$n-1$ different values for its off-diagonal entries. Thus a terraced matrix can have at most~$2n-1$ distinct values.

\section{Spectral properties of terraced matrices}\label{sec:spectral}
With the structural results of the previous section at hand, we proceed with spectral investigations, which enable us to resolve the claims from Theorem~\ref{thm:VDPassertions}.
\begin{theorem}\label{thm:psd}
A real nonnegative terraced matrix is positive semidefinite.
\end{theorem}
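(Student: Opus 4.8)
The plan is to exploit the terraced structure through a layer\=/cake decomposition of $\Cm$ into indicator matrices of its superlevel sets. Since $\Cm$ is nonnegative with only finitely many distinct entries, I would first enumerate the distinct values $0=\ell_0<\ell_1<\cdots<\ell_k$ attained by the entries and write, for every pair $(v,w)$,
\begin{equation*}
    \cm_{vw} = \sum_{i=1}^{k} (\ell_i - \ell_{i-1})\,\bo[\cm_{vw}\ge\ell_i],
\end{equation*}
which telescopes correctly precisely because nonnegativity lets the sum start at $\ell_0=0$. Writing $M_i$ for the $0$\=/$1$ indicator matrix of the superlevel set $S_{\ell_i}(\Cm)$, this reads $\Cm=\sum_{i=1}^{k}(\ell_i-\ell_{i-1})M_i$, a combination of the $M_i$ with strictly positive coefficients.

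The key step is to observe that each $M_i$ is positive semidefinite. By Definition~\ref{def:terraced} the superlevel set decomposes as $S_{\ell_i}(\Cm)=\bigcup_{X\in\mathcal{T}(\ell_i)} X\times X$ with the sets $X$ pairwise disjoint, so its indicator matrix is exactly
\begin{equation*}
    M_i = \sum_{X\in\mathcal{T}(\ell_i)} J_X = \sum_{X\in\mathcal{T}(\ell_i)} \bo_X\bo_X^\top.
\end{equation*}
Each summand $\bo_X\bo_X^\top$ is a rank\=/one positive semidefinite matrix, and a sum of positive semidefinite matrices is again positive semidefinite; hence $M_i$ is positive semidefinite.

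Combining the two steps, $\Cm$ is a nonnegative linear combination of positive semidefinite matrices, so for every $u\in\R^V$ one has $u^\top\Cm u=\sum_i (\ell_i-\ell_{i-1})\,u^\top M_i u\ge 0$, which gives the claim.

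I expect no serious obstacle; the only point requiring genuine care is the role of nonnegativity. It is exactly what guarantees that the layer\=/cake representation uses only superlevel sets indexed by $\ell\ge 0$ together with nonnegative weights. Were negative entries permitted, the decomposition would additionally involve sublevel sets, whose indicator matrices need not be positive semidefinite, and the argument would break down—consistent with the fact that the hypothesis of nonnegativity cannot be dropped.
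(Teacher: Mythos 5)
Your proof is correct and follows essentially the same route as the paper: both decompose $\Cm$ via a telescoping (layer-cake) sum over the distinct entry values into a nonnegative combination of the rank-one positive semidefinite matrices $J_X=\bo_X\bo_X^\top$ coming from the terraced structure. The only cosmetic difference is that the paper allows the smallest value $\ell_0$ to be positive and accounts for it with an explicit $\ell_0@J_V$ term, whereas you anchor the telescoping at $0$; both handle nonnegativity equivalently.
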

\begin{proof}
Let $0 \le \ell_0 < \dots < \ell_k$ be the distinct values of entries of a real nonnegative terraced matrix $\Cm$. With the notation of Definition~\ref{def:terraced}, we can write $\Cm$ as
\begin{equation*}
    \Cm= \ell_0@J_V + \sum_{\mathclap{i=1}}^k~\sum_{\mathclap{~~~~X\in \mathcal{T}(\ell_i)}}\,(\ell_i-\ell_{i-1})@J_X.
\end{equation*}
This is a nonnegative linear combination of positive semidefinite matrices which is why $\Cm$ is positive semidefinite as well. 
\end{proof}
\begin{proof}[Proof of Item~\ref{thm:VDPassertions:1} of Theorem~\ref{thm:VDPassertions}]
Let $\Cm=(\cm_{vw})$ be the edge-connectivity matrix of a weighted graph $G$. Then $C$ satisfies the \GH by Theorem~\ref{thm:triangle}. Furthermore, the matrix $\Cm+\diag(m(v) : v\in V)$ by definition satisfies $\cm_{vv}\ge \cm_{vw}$ for all $v, w\in V$, because $m(v)$ denotes the maximum entry in \mbox{row~$v\in V$}. So Theorem~\ref{thm:characteriztion} tells us that $\Cm+\diag(m(v) : v\in V)$ is terraced. Moreover, it is clearly real and nonnegative and thus positive semidefinite, by Theorem~\ref{thm:psd}.
\end{proof}
Some eigenvalues and eigenvectors of terraced matrices can be read off from row maxima directly. In particular, we easily obtain the smallest eigenvalue in case of a nonnegative terraced matrix.
\begin{theorem}\label{thm:mineig}
Let $\Cm=(c_{vw})$ be a real symmetric matrix whose off-diagonal entries satisfy the \GH and let $x,y\in V$ with $x\neq y$. Then the value $\cm_{xy}$ is maximal among the off-diagonal elements in its row and column and $\cm_{xx}=\cm_{yy}$ if and only if $e_x-e_y$ is an eigenvector of $\Cm$ with corresponding eigenvalue $\cm_{xx}-\cm_{xy}$.
\end{theorem}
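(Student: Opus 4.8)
The plan is to reduce the eigenvector condition to entrywise identities and then play the \GH off against itself. First I would compute the action of $\Cm$ on $e_x-e_y$ coordinatewise: the $v$-th entry of $\Cm(e_x-e_y)$ equals $\cm_{vx}-\cm_{vy}$. Requiring $\Cm(e_x-e_y)=\lambda(e_x-e_y)$ splits into three cases. The coordinate $v=x$ gives $\cm_{xx}-\cm_{xy}=\lambda$, the coordinate $v=y$ gives $\cm_{xy}-\cm_{yy}=-\lambda$, and every coordinate $v\notin\{x,y\}$ gives $\cm_{vx}=\cm_{vy}$. The first two equations together force $\cm_{xx}=\cm_{yy}$ and pin the eigenvalue to $\cm_{xx}-\cm_{xy}$. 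Hence the right-hand side of the claimed equivalence is the same as the conjunction of $\cm_{xx}=\cm_{yy}$ with $\cm_{vx}=\cm_{vy}$ for all $v\neq x,y$. Since the clause $\cm_{xx}=\cm_{yy}$ already appears verbatim on the left-hand side, everything reduces to showing that ``$\cm_{xy}$ is maximal among the off-diagonal entries of its row and column'' is equivalent to ``$\cm_{vx}=\cm_{vy}$ for all $v\neq x,y$'' under the standing assumption that the off-diagonal entries satisfy the \GH.

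For the forward implication, I would assume that $\cm_{xy}$ is the largest off-diagonal entry in row $x$ and in column $y$; by symmetry this yields $\cm_{xy}\geq\cm_{vx}$ and $\cm_{xy}\geq\cm_{vy}$ for every $v\neq x,y$. Fixing such a $v$ and applying the \GH twice, with $y$ as the middle index in one inequality and $x$ in the other, gives $\cm_{vx}\geq\min\{\cm_{vy},\cm_{xy}\}=\cm_{vy}$ and $\cm_{vy}\geq\min\{\cm_{vx},\cm_{xy}\}=\cm_{vx}$, so that $\cm_{vx}=\cm_{vy}$, as needed.

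For the reverse implication, I would assume $\cm_{vx}=\cm_{vy}$ for all $v\neq x,y$ and fix an arbitrary $w\neq x,y$. Applying the \GH with $w$ as the middle index gives $\cm_{xy}\geq\min\{\cm_{xw},\cm_{wy}\}$; the hypothesis $\cm_{xw}=\cm_{yw}=\cm_{wy}$ makes this minimum equal to $\cm_{xw}$, whence $\cm_{xy}\geq\cm_{xw}$. As $w$ was arbitrary, $\cm_{xy}$ dominates every other off-diagonal entry of row $x$, and the completely symmetric argument handles column $y$. I do not expect a genuine obstacle here; the only points demanding care are choosing the correct index as the ``middle'' one in each application of the \GH and treating the row-maximality and column-maximality symmetrically, which the symmetry $\cm_{vw}=\cm_{wv}$ makes routine.
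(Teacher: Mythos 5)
Your proposal is correct and follows essentially the same route as the paper: read the eigenequation off coordinatewise to reduce everything to $\cm_{xx}=\cm_{yy}$ together with $\cm_{vx}=\cm_{vy}$ for all $v\neq x,y$, and then establish the equivalence of the latter with row/column maximality of $\cm_{xy}$ by two applications of the \GH in each direction. The only difference is organizational — you cleanly separate the equivalence into a standalone claim rather than interleaving it with the eigenvector computation — but the underlying argument is identical.
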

\begin{proof}
First, let $x,y\in V$ with $x\neq y$ satisfy $\cm_{xx}=\cm_{yy}$ and let $\cm_{xy}=\cm_{yx}$ be a maximum off-diagonal entry in its row and column. As $\Cm$ satisfies the \GH, we obtain for each $z\in V$ with $z\neq x$ and $z\neq y$ that
\[
\cm_{xz}\ge \min\{\cm_{xy},\cm_{yz}\}=\cm_{yz}\ge \min\{\cm_{yx},\cm_{xz}\} = \cm_{xz}.
\]
This implies that $\cm_{xz}=\cm_{yz}$. As a consequence,
\[
\big( \Cm@(e_x-e_y) \big)_z=\begin{cases}
    0                                       &\text{if }z\neq x \text{ and }z\neq y,\\
    \cm_{xx}-\cm_{xy}                       &\text{if }z=x,\\
    \cm_{yx}-\cm_{yy} = \cm_{xy}-\cm_{xx}   &\text{if }z=y
\end{cases}
\]
and thus $e_x-e_y$ is an eigenvector for $\cm_{xx}-\cm_{xy}$.

Conversely, let $e_x - e_y$ be an eigenvector of $\Cm$ with corresponding eigenvalue $c_{xx}-c_{xy}$ for some~$x,y\in V$. Then the eigenequation $\left(\Cm(e_x-e_y) \right)_z=\cm_{zx}-\cm_{zy}=0$ for $z\in V$ with $z\neq x$ and $z\neq y$ implies that $\cm_{zx}=\cm_{zy}$. Because $\Cm$ is symmetric it follows that $\cm_{xz}=\cm_{zy}=\min\{\cm_{xz},\cm_{zy}\}\leq \cm_{xy}$ by the \GH. Hence, $\cm_{xy}$ is the maximum off-diagonal entry in its row and column. Furthermore, as $\cm_{xx}-\cm_{xy}$ is an eigenvalue of $\Cm$ with eigenvector $e_x - e_y$, the eigenequation $\left(\Cm(e_x-e_y) \right)_y=\cm_{yx}-\cm_{yy} = \cm_{xy}-\cm_{xx}$ implies that $\cm_{xx}=\cm_{yy}$, as desired.
\end{proof}
\begin{proof}[Proof of Item~\ref{thm:VDPassertions:2} of Theorem~\ref{thm:VDPassertions}]
Let $C=(\cm_{vw})$ be the edge-connectivity matrix of a weighted graph. Then $\Cm$ is nonnegative with only zeros on its diagonal and its off-diagonal entries satisfy the \GH by Theorem~\ref{thm:triangle}. Elements $x,y\in V(G)$ with $\cm_{xy} = M\coloneqq \max\{\cm_{vw} : v,w\in V\}$ satisfy the conditions of Theorem~\ref{thm:mineig} and so $\cm_{xx}-\cm_{xy}=-M$ is an eigenvalue of $\Cm$. Moreover, by Theorem \ref{thm:characteriztion}, $MI+\Cm$ is a nonnegative terraced matrix all of whose eigenvalues are nonnegative according to Theorem~\ref{thm:psd}. So $-M$ is indeed the smallest eigenvalue of $\Cm$.
\end{proof}
The energy bound~\ref{thm:VDPassertions:3} of Theorem \ref{thm:VDPassertions} follows directly from the following more general statement.
\begin{theorem}\label{thm:energy}
Let $\Cm$ be a real nonnegative symmetric matrix whose off-diagonal entries satisfy the \GH with $n$ rows and columns and only zeros on its diagonal. Then the energy of $C$ is at most $2(n-1)M$, where $M\coloneqq\max\{\cm_{vw} : v, w\in V\}$. Furthermore, $M(J-I)$ is the only such matrix that attains this bound.
\end{theorem}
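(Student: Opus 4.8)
The plan is to read the energy directly off the spectrum. Since $\Cm$ has only zeros on its diagonal, $\trace\Cm=0$, so the positive eigenvalues sum to exactly the same quantity as the absolute values of the negative ones; hence $\Ep(\Cm)$ equals twice the sum of the absolute values of the negative eigenvalues. First I would shift by $M$: because the diagonal entries of $MI+\Cm$ are all equal to $M$, which dominates every off-diagonal entry, Theorem~\ref{thm:characteriztion} shows that $MI+\Cm$ is terraced, and since it is nonnegative, Theorem~\ref{thm:psd} shows it is positive semidefinite. Consequently every eigenvalue of $\Cm$ is at least $-M$, so each negative eigenvalue of $\Cm$ has absolute value at most $M$.

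Next I would count eigenvalues. Assuming $M>0$ (the case $M=0$ forces $\Cm=0=M(J-I)$ trivially), the already-established Item~\ref{thm:VDPassertions:2} supplies the eigenvalue $-M$, so $\Cm$ has at least one negative eigenvalue and, because $\trace\Cm=0$, at least one positive eigenvalue. Thus at most $n-1$ eigenvalues are negative, each of absolute value at most $M$, which yields
\[
\Ep(\Cm)=2\sum_{\lambda<0}|\lambda|\le 2(n-1)M .
\]

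For the equality case I would trace the two inequalities backward. Equality forces exactly $n-1$ negative eigenvalues, all equal to $-M$, and then the constraint $\trace\Cm=0$ pins the remaining eigenvalue to $(n-1)M$. Hence $MI+\Cm$ has the single nonzero eigenvalue $nM$ and is a rank-one positive semidefinite matrix, so $MI+\Cm=nM\,uu^\top$ for some unit vector $u$. Comparing diagonal entries gives $u_v^2=1/n$ for every $v$, and nonnegativity of the off-diagonal entries $\cm_{vw}=nM\,u_vu_w$ forces all coordinates of $u$ to share a common sign; thus $u=\pm\tfrac{1}{\sqrt n}\bo$ and $\Cm=M(J-I)$. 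Finally I would verify that $M(J-I)$ genuinely attains the bound: its spectrum consists of $(n-1)M$ once and $-M$ with multiplicity $n-1$, whose energy is indeed $2(n-1)M$. For edge-connectivity matrices this extremal matrix is precisely the uniformly $M$-edge-connected case, which is what Item~\ref{thm:VDPassertions:3} asserts.

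The main obstacle is the uniqueness half. The upper bound itself is a short spectral counting argument resting on the shift and the trace, but extracting the exact matrix from equality requires both the rank-one structure of $MI+\Cm$ and the sign argument coming from nonnegativity. The delicate point to get right is ruling out configurations with fewer but larger-magnitude negative eigenvalues; this is exactly what the uniform bound $|\lambda|\le M$ together with the trace constraint prevents, and I would make sure the equality discussion keeps track of both at once.
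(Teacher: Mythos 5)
Your proof is correct. The upper bound runs exactly as in the paper: $\trace\Cm=0$ makes the energy twice the sum of the absolute values of the negative eigenvalues, there are at most $n-1$ of these, and each has absolute value at most $M$ because $MI+\Cm$ is a nonnegative terraced matrix and hence positive semidefinite --- you actually spell out this last step more carefully than the paper, whose proof only cites Theorem~\ref{thm:mineig} (which produces $-M$ as \emph{an} eigenvalue) and leaves the lower bound $\lambda\ge-M$ on \emph{all} eigenvalues to be imported from the proof of Item~\ref{thm:VDPassertions:2}. Where you genuinely diverge is the uniqueness claim. The paper disposes of it via Perron--Frobenius: any matrix of this type other than $M(J-I)$ has largest eigenvalue strictly below $(n-1)M$, so equality fails. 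You instead trace equality back through the spectrum: the negative eigenvalues must be $-M$ with multiplicity exactly $n-1$, the trace pins the remaining eigenvalue to $(n-1)M$, so $MI+\Cm$ is the rank-one positive semidefinite matrix $nM@uu^\top$, and the diagonal entries together with nonnegativity of the off-diagonal entries force $u=\pm\bo/\sqrt{n}$, i.e.\ $\Cm=M(J-I)$. Your route is more elementary and makes explicit a step the paper leaves implicit (that equality forces a single positive eigenvalue equal to $(n-1)M$ rather than several positive eigenvalues summing to it), at the cost of a slightly longer argument; the paper's appeal to Perron--Frobenius is shorter but asks the reader to supply the same spectral accounting. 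Both arguments are sound, and your handling of the degenerate case $M=0$ is a welcome extra precaution.
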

\begin{proof}
Because the trace of $C$ is zero, the energy of $C$ is twice the sum of the absolute values of the negative eigenvalues. This sum is at most $(n-1)$ times the absolute value of the smallest eigenvalue which by Theorem \ref{thm:mineig} is $-M$.

It is an easy computation that the matrix $M(J-I)$ attains the bound. Furthermore, it is a consequence of the Perron-Frobenius theory that any other matrix whose entries are bounded from above by $M$ has a largest eigenvalue strictly less than $(n-1)M$. So $M(J-I)$ is indeed the only matrix which attains the bound.
\end{proof}
In terms of graph theory, the preceding energy bound is attained if and only if we are given a uniformly $M$-edge-connected graph. In the remainder of this section, we refine our results on the eigenvector structure of edge-connectivity matrices and achieve a lower bound for the energy.
\begin{theorem}\label{thm:energyRefined}
Let $\Cm=(\cm_{xy})$ be a symmetric $V\times V$ matrix whose off-diagonal entries satisfy the \GH and denote the maximum off-diagonal entry in row $v\in V$ by $m(v)\coloneqq\max\{\cm_{vw} : w\in V\setminus \{v\} \}$. Then the following statements hold.
\begin{enumerate}
    \item The matrix $\Cm$ induces an equivalence relation on $V$ by\label{thm:energyRefined:1}
    \[
    x \sim y :\Leftrightarrow x = y\text{ or }m(x) = m(y) = \cm_{xy}.
    \]
    \item If $x_1\sim x_2$ and $y_1\sim y_2$, then $\cm_{x_1y_1}=\cm_{x_2y_2}$ or, equivalently, an entry $\cm_{xy}$ depends only on the equivalence classes of $x$ and $y$.\label{thm:energyRefined:2}
    \item Let $X_1,\ldots, X_k$ be the equivalence classes with respect to the relation~$\sim$ and assume further that the diagonal entries satisfy $\cm_{xx}=\cm_{yy}$ whenever $x\sim y$. Then the restrictions $\Cm\rvert_{X_i\times X_j}$ of $\Cm$ to $X_i\times X_j$ induce an equitable partition of $\Cm$ with equitable quotient matrix $Q=(q_{ij})_{i,j=1,\ldots,k},$ where
    \[
    q_{ij} =\begin{cases}
        \cm_{xy}@|X_j|          & \text{if }i\neq j,\text{ where }x\in X_i \text{ and } y\in X_j, \\
        \cm_{xx}+m(x)(|X_j|-1)  & \text{if }i=j,\text{ where }x\in X_j.
    \end{cases}
    \]\label{thm:energyRefined:3}
    \item Assume in addition to~\ref{thm:energyRefined:3} that $\Cm$ is nonnegative and has only zeros on its diagonal. Denote further by $m(X_i)$ the common value $m(x)$ for $x\in X_i$. Then
    \begin{equation*}
    \Ep(\Cm) = \Ep(Q) + \textnormal{trace}(Q)  \ge 2\sum_{\mathclap{i=1}}^k (|X_{i}|-1)@m(X_i).
\end{equation*}
This inequality is an equality if and only if the equitable quotient matrix $Q$ has no negative eigenvalues. \label{thm:energyRefined:4}
    \end{enumerate}
\end{theorem}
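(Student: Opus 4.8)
The plan is to prove the four items in order, with a single lemma about the relation $\sim$ carrying most of the weight for the first two. The lemma I would establish first is: if $a\sim b$ with $a\neq b$, then $\cm_{az}=\cm_{bz}$ for every $z\in V\setminus\{a,b\}$. This is exactly the computation already carried out in the proof of Theorem~\ref{thm:mineig} — since $\cm_{ab}=m(a)=m(b)$ is a maximum off-diagonal entry in both rows, the \GH forces $\cm_{az}\ge\cm_{bz}\ge\cm_{az}$ — and crucially it makes no use of the diagonal values. For Item~\ref{thm:energyRefined:1}, reflexivity and symmetry are immediate from the definition and the symmetry of $\Cm$; for transitivity I would take distinct $x\sim y\sim z$, so that $\cm_{xy}=\cm_{yz}=m(x)=m(y)=m(z)$, and apply the \GH once: $\cm_{xz}\ge\min\{\cm_{xy},\cm_{yz}\}=m(x)$, while $\cm_{xz}\le m(x)$ by maximality in row $x$, giving $\cm_{xz}=m(x)=m(z)$ and hence $x\sim z$.

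For Item~\ref{thm:energyRefined:2} I would chain the lemma. Within one class every off-diagonal entry equals the common value $m(X)$, because any two distinct members are equivalent; and for two members $a,a'$ of one class and a point $z$ outside it the lemma gives $\cm_{az}=\cm_{a'z}$, so the value between two distinct classes is well defined independently of the representatives chosen. Combining these observations yields $\cm_{x_1y_1}=\cm_{x_2y_2}$ for off-diagonal index pairs, which is the content of the claim.

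Item~\ref{thm:energyRefined:3} then amounts to verifying constant row sums. In an off-diagonal block $\Cm\rvert_{X_i\times X_j}$ every entry equals the common between-class value $\cm_{xy}$ by Item~\ref{thm:energyRefined:2}, so each row sums to $\cm_{xy}\,|X_j|$. In a diagonal block $\Cm\rvert_{X_j\times X_j}$ the row indexed by $x$ contributes the diagonal entry $\cm_{xx}$ plus $|X_j|-1$ off-diagonal entries each equal to $m(x)$; here the added hypothesis that $\cm_{xx}=\cm_{yy}$ whenever $x\sim y$, together with $m$ being constant on each class, is exactly what makes this sum independent of the chosen row and equal to $\cm_{xx}+m(x)(|X_j|-1)$. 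That the eigenvalues of $Q$ are eigenvalues of $\Cm$ is then the standard equitable-partition fact quoted in the introduction.

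The substantive part is Item~\ref{thm:energyRefined:4}, which I would obtain by determining the full spectrum of $\Cm$ through the orthogonal decomposition $\R^V=W_1\oplus W_2$, where $W_1=\operatorname{span}\{\bo_{X_1},\ldots,\bo_{X_k}\}$ consists of vectors constant on each class and $W_2=W_1^\perp$ consists of vectors summing to zero on each class; both are $\Cm$-invariant because the partition is equitable and $\Cm$ is symmetric. On $W_1$ the matrix acts as $Q$ in the basis $\{\bo_{X_i}\}$, contributing the eigenvalues of $Q$. The key computation is on $W_2$: for $u$ summing to zero on each class and $x\in X_i$, the off-diagonal blocks annihilate $u$ (a constant times a zero-sum vector), while the diagonal block gives $(\Cm u)_x=m(X_i)\sum_{y\in X_i\setminus\{x\}}u_y=-m(X_i)u_x$, using the zero diagonal and $\sum_{y\in X_i}u_y=0$. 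Hence $\Cm$ restricts to the scalar $-m(X_i)$ on the zero-sum subspace of $X_i$, contributing the eigenvalue $-m(X_i)$ with multiplicity $|X_i|-1$; the dimension count $k+\sum_i(|X_i|-1)=n$ confirms that $W_1$ and $W_2$ capture all eigenvalues. Since $\Cm$ is nonnegative, $m(X_i)\ge 0$, so these eigenvalues contribute $\sum_i(|X_i|-1)\,m(X_i)$ to the energy, and because the zero diagonal makes this sum equal to $\trace(Q)$ we obtain $\Ep(\Cm)=\Ep(Q)+\trace(Q)$. The stated bound then reduces to $\Ep(Q)\ge\trace(Q)$, which holds because $\trace(Q)$ is the sum of the (necessarily real) eigenvalues of $Q$ whereas $\Ep(Q)$ is the sum of their absolute values, with equality precisely when $Q$ has no negative eigenvalue. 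I expect the only delicate point to be the justification that the restriction of $\Cm$ to $W_2$ is exactly this scalar on each block and that no eigenvalue is double-counted between the two summands; the dimension count is the safeguard against the latter.
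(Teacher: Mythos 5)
Your proposal is correct and follows essentially the same route as the paper: the same GH manipulations for Items (i)--(iii), and for Item (iv) the same spectral decomposition, since your zero-sum subspace $W_2$ is exactly the span of the paper's eigenvectors $e_{x_1}-e_{x_i}$ and your $W_1$ carries the quotient matrix $Q$ just as in the paper. Your explicit computation that $\Cm$ acts as the scalar $-m(X_i)$ on the zero-sum part of each class, together with the dimension count, is a slightly more careful packaging of the paper's ``remaining eigenvectors can be chosen constant on the classes'' step, but it is not a different argument.
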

\begin{proof}
We begin with Item~\ref{thm:energyRefined:1}. The relation $\sim$ is reflexive by definition. It is symmetric because $\Cm$ is. As for transitivity assume that $x\sim y$ and $y\sim z$. Hence, $m(x)=\cm_{xy}=m(y)=\cm_{yz}=m(z)$. Thus we obtain by applying the \GH that $\cm_{xy}=\min\{\cm_{xy},\cm_{yz}\}\le \cm_{xz}\le m(x)=\cm_{xy}$ and hence $\cm_{xz}=\cm_{xy}=m(x)=m(z)$, as desired.

For Item~\ref{thm:energyRefined:2}, let $x_1\sim x_2$ and $y_1\sim y_2$. This means that $\cm_{x_1x_2}$ is the maximum entry in rows $x_1$ and $x_2$. Likewise, $\cm_{y_1y_2}$ is the maximum entry in rows $y_1$ and $y_2$. By applying the \GH and the symmetry of $C$ repeatedly, we obtain that
\begin{align*}
    \cm_{x_1y_1}&\geq\min\{\cm_{x_1y_2},\cm_{y_2y_1}\}=\cm_{x_1y_2}\geq\min\{\cm_{x_1x_2},\cm_{x_2y_2}\}=\cm_{x_2y_2} \\
    &\geq\min\{\cm_{x_2y_1},\cm_{y_1y_2}\}=\cm_{x_2y_1}\geq\min\{\cm_{x_2x_1},\cm_{x_1y_1}\}=\cm_{x_1y_1}.
\end{align*}
We observe that all inequalities in this chain are indeed equalities and consequently $\cm_{x_1y_1}=\cm_{x_2y_2}$, as desired.

For Item~\ref{thm:energyRefined:3}, note that Item~\ref{thm:energyRefined:2} provides us with the fact that the submatrices $\Cm\rvert_{X_i\times X_j}$ are constant for $i,j\in V$ with $i\neq j$. This implies in particular that their row sums are constant. Similarly, for $i\in V$ all off-diagonal entries of $\Cm\rvert_{X_i\times X_i}$ have the same value by the definition of the relation $\sim$ and thus all diagonal entries have the same value by assumption. The stated formula for the entries $q_{ij}$ is a direct consequence.

We finally turn to Item~\ref{thm:energyRefined:4}. Let $X=\{x_1,\ldots,x_s\}$ be an equivalence class with respect to the relation $\sim$. Then we obtain $s-1$ linearly independent eigenvectors $e_{x_1}-e_{x_i}$ for $i=2,\ldots, s$ with a corresponding eigenvalue $-m(X)\le 0$. This contributes $-(s-1)m(X)$ to the sum of the negative eigenvalues. Therefore, summation over all classes, contributes
\begin{equation*}
    \sum_{i=1}^k \big((|X_{i}|-1)@m(X_i)\big)=\textnormal{trace}(Q)
\end{equation*}
to $\Ep(\Cm)$. From the partition of $V$ into equivalence classes $X_1,\ldots, X_k$, we obtain in total $|V|-k$ nonpositive eigenvalues, when counting multiplicities. The corresponding linearly independent eigenvectors are of the form $e_x-e_y$ where $x\sim y$. The remaining $k$ eigenvectors in an eigenbasis can be chosen orthogonal on the aforementioned vectors. This means, they can be chosen constant on the classes $X_i$ or, equivalently, of the form
\[
Z=\sum_{i=1}^k z_i\bo_{X_i}\quad\text{for appropriate }z_i\in \R.
\]
The corresponding eigenequation $\Cm Z=\lambda Z$ is equivalent to $Qz=\lambda z$, where $z=(z_1,\ldots,z_k)^\top$. This shows that the remaining $k$ eigenvalues of $\Cm$, in particular the positive ones, are among the eigenvalues of $Q$. This provides us with the relation~$\Ep(\Cm) = \Ep(Q) + \textnormal{trace}(Q)$ which in turn implies the bound to be shown, as clearly $\Ep(Q) \ge \textnormal{trace}(Q)$ with equality if and only if all eigenvalues of $Q$ are nonnegative.
\end{proof}

\begin{corollary}
The equitable quotient matrix $Q$ from the previous theorem is similar to the symmetric matrix $Q^\prime=(q^\prime_{ij})$ with
\[
    q^\prime_{ij} =\begin{cases}
        \cm_{xy}@(|X_j||X_i|)^{1/2}          & \text{if }i\neq j,\text{ where }x\in X_i \text{ and } y\in X_j, \\
        \cm_{xx}+m(x)(|X_j|-1)  & \text{if }i=j,\text{ where }x\in X_j.
    \end{cases}
    \]
If $Q^\prime$ is positive semidefinite, the inequality in the previous theorem is an equality.
\end{corollary}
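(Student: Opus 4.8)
The plan is to realize the asserted similarity through an explicit diagonal conjugation and then read off the spectral consequence. First I would introduce the diagonal matrix $D=\diag(|X_i|^{1/2} : i=1,\ldots,k)$ and compute $DQD^{-1}$ entrywise. For the off-diagonal entries, with $x\in X_i$, $y\in X_j$ and $i\neq j$, one finds
\[
(DQD^{-1})_{ij}=|X_i|^{1/2}\, q_{ij}\, |X_j|^{-1/2}=|X_i|^{1/2}\cdot \cm_{xy}|X_j|\cdot |X_j|^{-1/2}=\cm_{xy}(|X_i||X_j|)^{1/2},
\]
while the diagonal entries are left unchanged by the conjugation, so that $(DQD^{-1})_{ii}=q_{ii}=\cm_{xx}+m(x)(|X_i|-1)$. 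Thus $DQD^{-1}=Q^\prime$, which shows that $Q$ and $Q^\prime$ are similar. Moreover $Q^\prime$ is symmetric, because $\cm_{xy}=\cm_{yx}$ and the factor $(|X_i||X_j|)^{1/2}$ is itself symmetric in $i$ and $j$.

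For the second assertion I would invoke the equality condition in Item~\ref{thm:energyRefined:4} of Theorem~\ref{thm:energyRefined}, according to which the energy bound is attained precisely when $Q$ has no negative eigenvalues. Since similar matrices share the same spectrum, the eigenvalues of $Q$ coincide with those of the symmetric matrix $Q^\prime$ and are therefore real. If $Q^\prime$ is positive semidefinite, then all its eigenvalues, and hence all eigenvalues of $Q$, are nonnegative, so $Q$ has no negative eigenvalue and the inequality of Theorem~\ref{thm:energyRefined} is an equality.

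I do not anticipate a genuine obstacle here: the only point requiring care is the bookkeeping of the scaling factors, that is, checking that conjugation by the square roots of the class sizes converts the asymmetric factor $|X_j|$ in the off-diagonal entries of $Q$ into the symmetric $(|X_i||X_j|)^{1/2}$. This symmetrization of an equitable quotient matrix by a diagonal similarity is a standard device, and its payoff is precisely that $Q^\prime$, being symmetric, is diagonalizable with real eigenvalues, which is exactly the property that makes the positive-semidefiniteness hypothesis combine cleanly with Item~\ref{thm:energyRefined:4}.
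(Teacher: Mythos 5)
Your proposal is correct and matches the paper's argument exactly: the paper also establishes the similarity via conjugation by $W=\diag(|X_i|^{1/2}:i=1,\ldots,k)$, and the spectral consequence follows from the equality condition in Item~(iv) of Theorem~\ref{thm:energyRefined} just as you describe. Your entrywise verification merely spells out what the paper leaves as a one-line assertion.
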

\begin{proof}
We have $Q^\prime = WQW^{-1}$ with $W=\diag(|X_i|^{1/2}:i=1,\ldots,k).$
\end{proof}

\begin{example*}
The edge-connectivity matrix of the graph $G$ in Figure~\ref{fig:CounterExample} is
\begin{equation*}
    \Cm(G) = \begin{bmatrix}[first-row, first-col]
          & \textcolor{chroma4gray!80!black}{1} & \textcolor{chroma4gray!80!black}{2} & \textcolor{chroma4gray!80!black}{3} & \textcolor{chroma4gray!80!black}{4} & \textcolor{chroma4gray!80!black}{5} & \textcolor{chroma4gray!80!black}{6} \\
        \textcolor{chroma4gray!80!black}{1} & 0 & 4 & 4 & 4 & 3 & 3 \\
        \textcolor{chroma4gray!80!black}{2} & 4 & 0 & 4 & 4 & 3 & 3 \\
        \textcolor{chroma4gray!80!black}{3} & 4 & 4 & 0 & 4 & 3 & 3 \\
        \textcolor{chroma4gray!80!black}{4} & 4 & 4 & 4 & 0 & 3 & 3 \\
        \textcolor{chroma4gray!80!black}{5} & 3 & 3 & 3 & 3 & 0 & 3 \\
        \textcolor{chroma4gray!80!black}{6} & 3 & 3 & 3 & 3 & 3 & 0 
    \end{bmatrix}.
\end{equation*}
The partition $V=X_1\cup X_2$ with $X_1=\{1,2,3,4\}$ and $X_2=\{5,6\}$ yields the equitable quotient matrix 
\[
Q=\begin{bmatrix}
12 & 6 \\
12 & 3
\end{bmatrix}.
\]
From $X_1$ we obtain three eigenvalues $-4$ with linearly independent eigenvectors $e_1-e_2$, $e_2-e_3,$ $e_3-e_4$ and $X_2$ provides us with an eigenvalue $-3$ corresponding to the eigenvector $e_5-e_6$. A lower bound on the energy is therefore
\begin{equation*}
    2\sum_{\mathclap{i=1}}^k (|X_{i}|-1)@m(X_i) = 2@((4-1)@4 + (2-1)@3) = 30.
\end{equation*}
Theorem~\ref{thm:energyRefined} also tells us that this lower bound is not tight, because $Q$ has another negative eigenvalue, as its determinant is $-36$. However, the achieved value is quite close to the actual energy of $15+3\sqrt{41}\approx 34.21$.
\end{example*}

\section{Ultrametric distance matrices}\label{sec:ultrametric}

The structure of distance matrices of graphs is an active research topic. See Aouchiche and Hansen~\cite{aouchiche2014distance} for an overview or Stevanović and Indulal~\cite{stevanovic2009distance} for results on the distance energy. Most of the studies in that field involve shortest paths distances, whereas our results shed light on the spectral properties of ultrametric distance matrices. To see that, consider a graph $G$ and its edge-connectivity matrix $\Cm=(\cm_{vw})$ and define for two vertices $v,w\in V(G)$ the distance \mbox{$d(v,w)\coloneqq (\cm_{vw})^{-1}$}. Then the strong triangle inequality $d(x,z)\leq \max\{d(x,y), d(y,z)\}$ holds for all $x,y,z\in V(G)$. See also Gurvich~\cite{Gurvich10} for how this metric is some kind of resistance distance. Many of the results of the previous section have analogues in this setting. Indeed, if the entries of a matrix $D$ arise from an ultrametric, then $-D$ satisfies the \GH and all of our previous results that do not require a nonnegativity assumption hold for $-D$ as well. We state those results for completeness, occasionally with a slightly different wording.
\begin{theorem}
Let $d\colon V\times V\to \mathbb{R}$ be an ultrametric and denote the corresponding distance matrix by $D=(d(x,y))$. This in particular requires $d(x,x)=0$ for all $x\in V$. Furthermore, denote by $r(x)=\min\{d(x,y) : y\in V\setminus \{x\} \}$ the distance of $x$ to a nearest point. Then the following statements hold:
\begin{enumerate}
    \item The points $x,y \in V$ with $x\neq y$ are mutually nearest points (this means that $d(x,y)\le d(x,z)$ and $d(x,y)\le d(z,y)$ for each $z\in V\setminus\{x,y\}$), if and only if $e_x-e_y$ is an eigenvector of $D$ with corresponding eigenvalue $-d(x,y)$.
    \item The matrix $D$ induces an equivalence relation on $V$ by
    \[
    x \sim y :\Leftrightarrow x = y\text{ or }r(x) = r(y) = d(x,y).
    \]
    \item If $x_1\sim x_2$ and $y_1\sim y_2$, then $d(x_1,y_1)=d(x_2,y_2)$ or, equivalently, $d(x,y)$ depends only on the equivalence classes of $x$ and $y$, respectively.
    \item Let $X_1,\ldots, X_k$ be the equivalence classes with respect to the relation $\sim$ and denote by $r(X_i)$ the common value $r(x)$ of $x\in X_i$. Then the restrictions $D\rvert_{X_i\times X_j}$ of $D$ to $X_i\times X_j$ induce an equitable partition of $D$ with equitable quotient matrix $Q=(q_{ij})_{i,j=1,\ldots, k}$, where
    \[
    q_{ij} = \begin{cases}
        d(x,y)|X_j|     &\text{if }i\neq j,\text{ where }x\in X_i\text{ and }y\in X_j, \\ 
        r(X_j)(|X_j|-1) &\text{if }i=j.
    \end{cases}
    \]
\item The energies of $D$ and its equitable quotient matrix $Q$ are related by
\begin{equation*}
    \Ep(D) = \Ep(Q) + \textnormal{trace}(Q)  \ge 2\sum_{\mathclap{i=1}}^k (|X_{i}|-1)@r(X_i).
\end{equation*}
\end{enumerate}
\end{theorem}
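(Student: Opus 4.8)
The plan is to transfer every assertion to the auxiliary matrix $\Cm\coloneqq-D$ and then invoke the results of the previous section. First I would check the hypotheses. Since $d$ is an ultrametric, the strong triangle inequality $d(x,z)\le\max\{d(x,y),d(y,z)\}$ becomes, after multiplication by $-1$, the statement $\cm_{xz}=-d(x,z)\ge\min\{-d(x,y),-d(y,z)\}=\min\{\cm_{xy},\cm_{yz}\}$, so the off-diagonal entries of $\Cm$ satisfy the \GH. Moreover $\cm_{vv}=0\ge-d(v,w)=\cm_{vw}$ because $d$ is nonnegative. Hence $\Cm$ meets the hypotheses of Theorem~\ref{thm:mineig} and of Items~\ref{thm:energyRefined:1}--\ref{thm:energyRefined:3} of Theorem~\ref{thm:energyRefined}, none of which require nonnegativity.

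Next I would set up the translation dictionary $\cm_{xy}=-d(x,y)$ and $m(x)=\max_{w\neq x}(-d(x,w))=-r(x)$, and observe that the equivalence relation of Item~\ref{thm:energyRefined:1} coincides with the one in the present theorem, since $m(x)=m(y)=\cm_{xy}$ is literally the condition $r(x)=r(y)=d(x,y)$. For Item~(i) I would apply Theorem~\ref{thm:mineig}: the requirement that $\cm_{xy}$ be a maximal off-diagonal entry in its row and column is exactly the requirement that $x$ and $y$ be mutually nearest points, and the eigenvalue $\cm_{xx}-\cm_{xy}=d(x,y)$ of $\Cm$ for the eigenvector $e_x-e_y$ corresponds to the eigenvalue $-d(x,y)$ of $D=-\Cm$ for the same eigenvector. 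Items~(ii) and~(iii) are then verbatim restatements of Items~\ref{thm:energyRefined:1} and~\ref{thm:energyRefined:2}. For Item~(iv), the equitable quotient matrix $Q$ of $D$ is the negative of the one produced by Item~\ref{thm:energyRefined:3} for $\Cm$; substituting $\cm_{xy}=-d(x,y)$, $\cm_{xx}=0$, and $m(x)=-r(x)$ and flipping the overall sign yields precisely the stated formulas $q_{ij}=d(x,y)|X_j|$ for $i\neq j$ and $q_{ii}=r(X_i)(|X_i|-1)$.

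The one step that cannot be quoted directly, and which I expect to be the main obstacle, is the energy identity in Item~(v): Item~\ref{thm:energyRefined:4} additionally assumes the underlying matrix to be nonnegative, whereas $\Cm=-D$ is nonpositive. The remedy is to replay the bookkeeping of that proof on the genuinely nonnegative matrix $D$ itself, using the eigenstructure just determined. For each class $X_i$ the differences $e_x-e_y$ with $x,y\in X_i$ supply $|X_i|-1$ linearly independent eigenvectors of $D$, now with the nonpositive eigenvalue $-r(X_i)$, so their total contribution to $\Ep(D)$ is $\sum_i(|X_i|-1)r(X_i)$. The orthogonal complement of the span of these vectors consists exactly of the vectors $Z=\sum_i z_i\bo_{X_i}$ that are constant on classes, and on such $Z$ the eigenequation $DZ=\lambda Z$ reduces to $Qz=\lambda z$; hence the remaining $k$ eigenvalues of $D$ are those of $Q$ and contribute $\Ep(Q)$. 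Since $\trace(Q)=\sum_i r(X_i)(|X_i|-1)$ equals the first contribution, this gives $\Ep(D)=\Ep(Q)+\trace(Q)$, and the asserted lower bound follows from $\Ep(Q)\ge\trace(Q)$, the sum of the absolute values of the eigenvalues of $Q$ being at least their sum. The only delicate point throughout is keeping careful track of the systematic sign flip between $D$ and $-D$, and recognizing that the energy argument must be carried out on $D$ rather than formally applied to the nonpositive matrix $-D$; everything else is a transcription of the earlier theorems.
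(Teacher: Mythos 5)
Your proposal is correct and follows the route the paper itself intends: the paper offers no explicit proof of this theorem beyond the remark that $-D$ satisfies the \GH so that the results of Section~\ref{sec:spectral} transfer, and your argument carries out exactly that transfer (sign dictionary $\cm_{xy}=-d(x,y)$, $m(x)=-r(x)$, then Theorem~\ref{thm:mineig} and Items~\ref{thm:energyRefined:1}--\ref{thm:energyRefined:3} of Theorem~\ref{thm:energyRefined}). Your handling of Item~(v) is in fact more careful than the paper's blanket remark, since Item~\ref{thm:energyRefined:4} of Theorem~\ref{thm:energyRefined} assumes nonnegativity, which $-D$ fails; replaying the eigenvalue bookkeeping directly on the nonnegative matrix $D$, as you do, correctly closes that gap.
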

We also have a lower bound on the smallest eigenvalue of an ultrametric distance matrix, which essentially relies on the following result of Zhan~\cite[Therorem 1]{Zhan2006} for general symmetric interval matrices.
\begin{theorem}\label{thm:zhan}
Let $X$ be a real symmetric $V\times V$ matrix with entries in an interval~$[m,M]$. Denote by $\lambda_n(X)$ the smallest eigenvalue of $X$ and consider the problem to
\begin{equation}
    \text{minimize }\lambda_n(X)\text{ such that }m@J \leq X \leq M@J,\tag{P1}
\end{equation}
where the inequalities are meant componentwise. Then the only optimal solution of problem~(P1) up to simultaneous permutations of rows and columns is
\begin{equation*}
    X=X^*\coloneqq\begin{bmatrix}
    m@J_{V_1\times V_1} & M@J_{V_1\times V_2}\\
    M@J_{V_2\times V_1} & m@J_{V_2\times V_2}
\end{bmatrix},
\end{equation*}
where $V_1 \cup V_2=V$ is a bipartition of $V$ with $\left \lvert |V_1|-|V_2|  \right\rvert \le 1$. The optimal value is
\begin{flalign*}
    \hspace{12ex}\lambda_n(X^*) &= \begin{cases}
    n@(m-M)/2                                      &\text{if }n \text{ is even,}\\
    \big(n@m- \sqrt{m^2+(n^2-1)@M^2}@@\big)/2    &\text{if }n \text{ is odd.}
\end{cases}&&
\end{flalign*}
\end{theorem}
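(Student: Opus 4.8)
The plan is to couple the Rayleigh--Ritz characterization $\lambda_n(X) = \min_{\tn{u}=1} u^\top X u$ with an interchange of the two minimizations. The feasible set $\{X=X^\top : m J \le X \le M J\}$ is compact and $\lambda_n$ is continuous, so a minimizer exists and
\[
\min_{X}\lambda_n(X) \;=\; \min_{X}\,\min_{\tn{u}=1} u^\top X u \;=\; \min_{\tn{u}=1}\,\min_{X} u^\top X u .
\]
For a fixed unit vector $u$ the inner problem is linear in the entries of $X$: from $u^\top X u = \sum_v X_{vv}u_v^2 + 2\sum_{v<w}X_{vw}u_vu_w$ one reads off that the minimum is attained by putting $X_{vw}=m$ whenever $u_vu_w\ge 0$ and $X_{vw}=M$ whenever $u_vu_w<0$. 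Thus the inner value depends on $u$ only through its sign pattern, and using $\sum_{v,w}u_vu_w=(P-N)^2$ and $\sum_{u_vu_w<0}u_vu_w=-2PN$ I would record it as
\[
g(u)=m\,(P^2+N^2)-2MPN,\qquad P=\!\!\sum_{u_v>0}\!u_v,\quad N=-\!\!\sum_{u_v<0}\!u_v .
\]

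The second step is to minimize $g$ over unit vectors. Fixing the sign-class sizes $a=|\{v:u_v>0\}|$ and $b=|\{v:u_v<0\}|$, the admissible pairs $(P,N)$ fill exactly the quarter-ellipse $\{P,N\ge 0 : P^2/a+N^2/b\le 1\}$, whose boundary is reached precisely when $u$ is constant on each sign class and wastes no norm. Since the quadratic form $(P,N)\mapsto m(P^2+N^2)-2MPN$ has the negative eigenvalue $m-M$ along the direction $(1,1)$, its minimum over this region lies on the boundary, which I would parametrize by $P=\sqrt a\cos\phi$, $N=\sqrt b\sin\phi$. This turns $g$ into $\tfrac{m(a+b)}{2}+\tfrac{m(a-b)}{2}\cos 2\phi-M\sqrt{ab}\,\sin 2\phi$, whose minimum over $\phi$ is $\tfrac{m(a+b)}{2}-\sqrt{\tfrac{m^2(a-b)^2}{4}+M^2ab}$. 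It remains to optimize over the integer split; writing $ab=(s^2-d^2)/4$ with $s=a+b$ and $d=a-b$ turns the radicand into $\tfrac14\big(M^2s^2+(m^2-M^2)d^2\big)$, and in the regime $m^2\le M^2$ relevant to the application the minimum decreases in $s$ and increases in $|d|$, so the optimum takes $s=n$ and $|d|$ as small as possible, namely $d=0$ for even $n$ and $|d|=1$ for odd $n$. Substituting $ab=n^2/4$ and $ab=(n^2-1)/4$ then produces the two displayed values, attained by the sign pattern that defines $X^*$.

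The final, and most delicate, step is uniqueness. Starting from an arbitrary optimizer $X$ with a unit eigenvector $u$ for the eigenvalue $\lambda_n(X)=\lambda_n(X^*)$, the chain $\lambda_n(X)=u^\top X u\ge g(u)\ge\min_v g(v)=\lambda_n(X^*)$ must collapse to equalities; hence $u$ minimizes $g$ and $X$ realizes the entrywise minimizer associated to $u$. The crux is to show that every minimizer $u$ of $g$ has full support and is constant on each of its two sign classes, for then the entrywise rule pins down $X$ on all of $V\times V$ and forces $X=X^*$ up to a relabelling of indices. I expect the main obstacle here to be the careful bookkeeping of the equality cases: Cauchy--Schwarz tightness on each class, uniqueness of the optimal angle $\phi$, and the strictness that $m^2<M^2$ supplies in ruling out unbalanced or partially supported optima. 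These degenerate boundary situations are exactly what make the uniqueness part heavier than the value computation, and they constitute the technical heart of Zhan's original argument.
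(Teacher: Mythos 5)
The paper offers no proof of this statement: it is imported verbatim from Zhan \cite[Theorem 1]{Zhan2006}, so there is no in-house argument to compare yours against, and your attempt has to be judged on its own. The value computation is correct and, to my knowledge, follows essentially Zhan's own route: the interchange $\min_X\min_{\lVert u\rVert_2=1}u^\top Xu=\min_{\lVert u\rVert_2=1}\min_X u^\top Xu$ is legitimate (a double minimum over compact sets may always be swapped), the inner linear program is solved correctly, and the reduction to minimizing $m@(P^2+N^2)-2MPN$ over the quarter-ellipse, followed by the optimization over $(s,d)=(a+b,a-b)$, reproduces exactly the two displayed values. A genuinely useful by-product of your derivation is that it exposes a hypothesis the paper's transcription drops: the monotonicity in $s$ and in $d^2$ that you invoke holds only when $m^2\le M^2$, and the theorem is false without that restriction. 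For $m=-2$, $M=1$, $n=2$ the matrix $m@J$ has smallest eigenvalue $-4$, beating the claimed optimum $-3$; and for $m=-M$ the matrix $m@J$ ties with $X^*$, so the uniqueness assertion needs $|m|<M$ strictly. Since the paper only applies the result with $0<m\le M$, nothing downstream is affected, but the statement as printed should be read with $|m|\le M$ assumed.

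The one real gap is the uniqueness part, which you outline but do not carry out. The collapsing chain $\lambda_n(X)=u^\top Xu\ge g(u)\ge\min g$ is the right skeleton, but to conclude $X=X^*$ you must actually prove (i) that every minimizer $u$ of $g$ has no zero coordinate --- otherwise the entrywise rule leaves every entry $X_{vw}$ with $u_v@u_w=0$ undetermined --- and (ii) that its two sign classes are nonempty with sizes $\lceil n/2\rceil$ and $\lfloor n/2\rfloor$. Both do follow from your setup when $|m|<M$: the quantity $\tfrac{ms}{2}-\tfrac12\sqrt{M^2s^2+(m^2-M^2)@d^2}$ is strictly decreasing in $s$ (forcing $s=n$, i.e.\ full support) and strictly increasing in $d^2$ (forcing the balanced split), and at the optimal angle one has $P,N>0$, so neither class is empty. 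Note also that constancy of $u$ on its sign classes, while it does hold at the optimum, is not actually needed to pin down $X$ --- only the sign pattern of $u$ enters the entrywise rule --- so that part of your ``technical heart'' can be skipped. What remains is to dispose of the degenerate cases ($m=M$, where the feasible set is the single matrix $M@J$, and $n=1$, where the stated formula does not even hold) explicitly. With these finite checks written out, the argument is complete.
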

\begin{theorem}
Let $d\colon V\times V\to \mathbb{R}$ be an ultrametric and let $D=(d(x,y))$ be the corresponding distance matrix. Denote \mbox{$m\coloneqq\min\{d(x,y) : x,y\in V\text{ with }x\neq y\}$}, $M\coloneqq\max\{d(x,y) : x,y\in V\text{ with }x\neq y\}$, and $n\coloneqq|V|$. Then the smallest eigenvalue $\lambda_n(D)$ of $D$ satisfies
\begin{flalign*}
\hspace{12ex}\lambda_n(X^*) \geq
\begin{cases}
    n@(m-M)/2-m                                      &\text{if }n \text{ is even,}\\
    \big(n@m- \sqrt{m^2+(n^2-1)@M^2}@@\big)/2 - m    &\text{if }n \text{ is odd.}
\end{cases}&&
\end{flalign*}
This bound is attained if and only if there is a bipartition $V=V_1 \cup V_2$ of $V$ with $\left \lvert |V_1|-|V_2|  \right\rvert \le 1$ such that the distance between each pair $(x,y)$ with $x\neq y$ is
\begin{flalign*}
\hspace{12ex}\hphantom{\lambda_n(X^*) \geq x}\negphantom{d(x,y)=x}d(x,y)=\begin{cases}
    m   &\text{ if }(x,y)\in V_1\times V_1 \text{ or }(x,y)\in V_2 \times V_2, \\
    M   &\text{ if }(x,y)\in V_1\times V_2 \text{ or }(x,y)\in V_2 \times V_1.
\end{cases}&&
\end{flalign*}
Equivalently, the stated bound is attained if and only if $D=X^*-mI$, where $X^*$ is defined as in Theorem~\ref{thm:zhan}.
\end{theorem}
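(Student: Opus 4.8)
The plan is to reduce the statement to Theorem~\ref{thm:zhan} by a single diagonal shift. The matrix $D$ itself is not feasible for problem~(P1), since its diagonal vanishes while $m>0$; but the shifted matrix $D+mI$ is. Its off-diagonal entries are the distances $d(x,y)$, which lie in $[m,M]$ by the definitions of $m$ and $M$, and its diagonal entries are all equal to $m$. Hence $mJ\le D+mI\le MJ$ componentwise, so Theorem~\ref{thm:zhan} applies to $D+mI$.

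First I would combine the inequality part of Theorem~\ref{thm:zhan} with the fact that a diagonal shift translates the spectrum, namely $\lambda_n(D+mI)=\lambda_n(D)+m$. Since $D+mI$ is feasible for~(P1), its smallest eigenvalue is at least the optimal value $\lambda_n(X^*)$, giving $\lambda_n(D)+m\ge\lambda_n(X^*)$. Rearranging and inserting the two cases of $\lambda_n(X^*)$ from Theorem~\ref{thm:zhan} produces exactly the claimed lower bound, for $n$ even and for $n$ odd.

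For the equality characterization I would invoke the uniqueness assertion of Theorem~\ref{thm:zhan}: up to simultaneous permutations of rows and columns, the optimum of~(P1) is attained only by $X^*$. Consequently $\lambda_n(D)$ meets the bound if and only if $\lambda_n(D+mI)=\lambda_n(X^*)$, which holds if and only if $D+mI=X^*$, that is $D=X^*-mI$. Writing out $X^*-mI$ yields precisely the two-valued matrix in the statement: zeros on the diagonal, the value $m$ inside each block $V_i\times V_i$, and the value $M$ across the bipartition $V=V_1\cup V_2$ with $\left\lvert |V_1|-|V_2| \right\rvert\le 1$.

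The point that still needs care---and the step I expect to be the main obstacle---is to confirm that this extremal configuration actually belongs to the class of ultrametric distance matrices, so that the biconditional is not vacuous on the achievability side. I would verify the strong triangle inequality directly for $X^*-mI$. Since it takes only the two off-diagonal values $m<M$, and since $d(x,y)=m$ holds exactly when $x$ and $y$ share a block, common block membership is an equivalence relation. The only configuration in which $d(x,z)\le\max\{d(x,y),d(y,z)\}$ could fail is when two of the three pairwise distances equal $m$; but then all three points lie in one block, forcing the third distance to equal $m$ as well. Hence the strong triangle inequality holds in every case, so $X^*-mI$ is a genuine ultrametric. The bound is thus attained within the family, and both directions of the equivalence follow.
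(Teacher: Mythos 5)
Your proposal is correct and follows essentially the same route as the paper: both reduce to Theorem~\ref{thm:zhan} via the diagonal shift by $mI$ (the paper phrases this as comparing the optimal values of an auxiliary problem (P2) with those of (P1), which amounts to your observation that $D+mI$ is feasible for (P1) and that shifting translates the spectrum), and both use the uniqueness of $X^*$ for the equality characterization. Your explicit verification that $X^*-mI$ satisfies the strong triangle inequality is a welcome detail that the paper only asserts in passing.
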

\begin{proof} We recall first that if $d$ is an ultrametric, then $d(x,x) = 0$ for all $x\in V$. Consequently, $D$ has only zeros on its diagonal. Thus the optimal value of the problem to
\begin{equation}
    \text{minimize }\lambda_n(X)\text{ such that }m@(J-I) \leq X \leq M@(J-I)\tag{P2}
\end{equation}
is certainly a lower bound for the smallest eigenvalue of an ultrametric matrix.
Now, if $X$ is an optimal solution of problem~(P2), then $X+mI$ is a feasible solution of problem~(P1) and thus $\text{opt(P2)}+m\geq\text{opt(P1)}$. Conversely, by Theorem~\ref{thm:zhan}, $X^* + mI$ is an optimal solution of problem~(P1) and $X^*$ is feasible for problem~(P2). Consequently, $\text{opt(P1)}-m\geq\text{opt(P2)}$, as desired. The stated bound on the smallest eigenvalue follows directly from Theorem~\ref{thm:zhan} and the fact that the unique optimal solution to (P2) is an ultrametric matrix.
\end{proof}

\section{Conclusions and related problems}
Motivated by the recent interest in the spectra of vertex-connectivity matrices, we found even stronger structural properties for the edge-connectivity matrix. Note that both problems, the issue of whether the energy of a vertex-connectivity matrix of a graph on $n$ vertices is bounded from above by $2(n-1)^2$ as well as the question for a tight lower bound on its smallest eigenvalue, are still open.

Furthermore, we obtained that for edge-connectivity matrices the upper bound $(n-1)^2$ on the largest eigenvalue as well as the lower bound $-(n-1)$ on the smallest eigenvalue are attained simultaneously by the matrix that arises from the complete graph. This provides us with a tight upper bound $(n-1)(n-2)$ on the \emph{spread} of such matrices, which is defined as the largest distance between any two eigenvalues of a matrix. This resolves a special case of an intriguing open problem stated by Zhan~\cite[Problem 2]{Zhan2006} for which Fallat and Xing~\cite{fallat2012spread} formulated a detailed conjecture. There, the question for the spread is stated for general symmetric interval matrices. Note, however, that our result on the spread is bound to the very rich structure of edge-connectivity matrices. In different settings, one might not expect to find a matrix that attains an upper bound on the largest eigenvalue and a lower bound on the smallest eigenvalue simultaneously.

\section*{Acknowledgments}
We greatly thank Dragan Stevanovi{\'c} for drawing our attention to the questions about vertex-connectivity matrices. This inspired our work on edge-connectivity matrices. Our research was partially funded by the Deutsche Forschungsgemeinschaft (DFG, German Research Foundation) -- Project-ID 416228727 -- SFB 1410.

\bibliographystyle{plain}
\bibliography{bibliography}

\end{document}